\def \To{\longrightarrow}
\def \gr{\operatorname{gr}}
\def \CoT{\operatorname{CoT}}
\def \Ext{\operatorname{Ext}}
\def \Vec{\operatorname{Vec}}
\def \comod{\operatorname{comod}}
\def \C{\mathcal{C}}
\def \D{\Delta}
\def \d{\delta}
\def \e{\varepsilon}
\def \A{\mathcal{A}}
\def \S{\mathcal{S}}
\def \K{\mathcal{K}}
\def \F{\mathcal{F}}
\def \Z{\mathbb{Z}}
\def \g{\mathrm{g}}
\def \k{\mathbbm{k}}
\def \1{\mathbf{1}}
\def \Id{\operatorname{Id}}
\def \Rep{\operatorname{Rep}}
\numberwithin{equation}{section}
\newtheorem{theorem}{Theorem}[section]
\newtheorem{lemma}[theorem]{Lemma}
\newtheorem{proposition}[theorem]{Proposition}
\newtheorem{corollary}[theorem]{Corollary}
\newtheorem{example}[theorem]{Example}
\newtheorem{remark}[theorem]{Remark}
\begin{document}

\title{QUIVER BIALGEBRAS AND MONOIDAL CATEGORIES}

\subjclass[2010]{16T10, 18D10, 16G20}

\keywords{bialgebra, monoidal category, quiver representation}

\author{Hua-Lin Huang (Jinan)}
\address{School of Mathematics, Shandong University,
Jinan 250100, China} \email{hualin@sdu.edu.cn}

\author{Blas Torrecillas (Almer\'ia)}
\address{Department of Algebra and Analysis, Universidad de Almer\'ia, 04071 Almer\'ia, Spain}
\email{btorreci@ual.es}

\date{}
\maketitle

\begin{abstract}
We study the bialgebra structures on quiver coalgebras and the
monoidal structures on the categories of locally nilpotent and locally finite quiver
representations. It is shown that the path coalgebra of an arbitrary
quiver admits natural bialgebra structures. This endows the category
of locally nilpotent and locally finite representations of an arbitrary quiver with
natural monoidal structures from bialgebras. We also obtain
theorems of Gabriel type for pointed bialgebras and hereditary
finite pointed monoidal categories.
\end{abstract}

\section{Introduction}

This paper is devoted to the study of natural bialgebra structures
on the path coalgebra of an arbitrary quiver and monoidal structures
on the category of its locally nilpotent and locally finite representations. A further
purpose is to establish a quiver setting for general pointed
bialgebras and pointed monoidal categories.

Our original motivation is to extend the Hopf quiver theory
\cite{cz,cr1,cr2,g,gs,mont2,oz} to the setting of generalized Hopf
structures. As bialgebras are a fundamental generalization of Hopf
algebras, we naturally initiate our study from this case. The basic
problem is to determine what kind of quivers can give rise to
bialgebra structures on their associated path algebras or coalgebras.

It turns out that the path coalgebra of an arbitrary quiver admits
natural bialgebra structures, see Theorem 3.2. This seems a bit surprising at
first sight by comparison with the Hopf case given in \cite{cr2},
where Cibils and Rosso showed that the path coalgebra of a quiver
$Q$ admits a Hopf algebra structure if and only if $Q$ is a Hopf
quiver which is very special. Bialgebra structures on
general pointed coalgebras are also considered via quivers thanks to
the Gabriel type theorem of coalgebras, see \cite{chz, cm}. Similar to the Hopf case obtained in \cite{oz}, we give a
Gabriel type theorem for general pointed bialgebras, see Proposition
3.3.

Our another motivation comes from finite monoidal categories which
are natural generalization of finite tensor categories \cite{eo}. To
the knowledge of the authors, not quite much is known for the
construction and classification of finite monoidal categories which
are not tensor categories, i.e., rigid monoidal categories \cite{egno}. By
taking advantage of the well-developed quiver representation theory,
the quiver presentation of a pointed bialgebra $B$ can help us to
investigate the monoidal category of right $B$-comodules.
Accordingly, some classification results of pointed monoidal
categories are obtained, see Proposition 4.1 and Corollary 4.2.

Bialgebra structures on the path coalgebra of a quiver $Q$ induce
monoidal structures on the category $\Rep^{lnlf} Q$ of locally
nilpotent and locally finite representations of $Q.$ These monoidal structures are
also expected to be useful for the studying of the category
$\Rep^{lnlf} Q$ itself. For example, the tensor product of quiver
representations naturally leads to the Clebsch-Gordan problem, i.e.,
the decomposition of the tensor product of any two representations into indecomposable
summands, and the computation of the representation ring of
$\Rep^{lnlf} Q$, etc. Note that the tensor product given here is
different from the vertex-wise and arrow-wise tensor product used in
\cite{h1,h2,k1,k2}, which in general is not from bialgebra, and
therefore should provide different information for the categories of
quiver representations. This interesting problem is the third
motivation and will be treated in the future.

We remark that a standard dual process will give rise to natural
bialgebra structures on the path algebra and monoidal structures on
the category of representations of a finite quiver. We prefer the
path coalgebraic approach as this is more convenient for exposition
and, more importantly, allows infinite quivers.

Throughout, we work over a field $\k.$ Vector spaces,
algebras, coalgebras, bialgebras, linear mappings, and unadorned
$\otimes$ are over $\k.$ The readers are referred to \cite{mont1,sw}
for general knowledge of coalgebras and bialgebras, and to
\cite{ass,s,s2} for that of quivers and their applications to
(co)algebras and representation theory.

\section{Quivers, Representations and Path Coalgebras}

As preparation, in this section we recall some basic notions and
facts about quivers, representations and path coalgebras.

A quiver is a directed graph. More precisely, a quiver is a
quadruple $Q=(Q_0,Q_1,s,t),$ where $Q_0$ is the set of vertices,
$Q_1$ is the set of arrows, and $s,t: Q_1 \longrightarrow Q_0$ are
two maps assigning respectively the source and the target for each
arrow. Note that, in this paper the sets $Q_0$ and $Q_1$ are allowed
to be infinite. If $Q_0$ and $Q_1$ are finite, then we say $Q$ is a
finite quiver. For $a \in Q_1,$ we write $a:s(a) \To t(a).$ A vertex
is, by convention, said to be a trivial path of length $0.$ We also
write $s(g)=g=t(g)$ for each $g \in Q_0.$ The length of an arrow is
set to be $1.$ In general, a non-trivial path of length $n \ (\ge
1)$ is a sequence of concatenated arrows of the form $p=a_n \cdots
a_1$ with $s(a_{i+1})=t(a_i)$ for $i=1, \cdots, n-1.$ By $Q_n$ we
denote the set of the paths of length $n.$ A quiver is said to be
acyclic if it has no cyclic paths, i.e. nontrivial paths with
identical starting and ending vertices.

Let $Q$ be a quiver and $\k Q$ the associated path space which is
the $\k$-span of its paths. There is a natural coalgebra structure
on $\k Q$ with comultiplication as split of paths. Namely, for a
trivial path $g,$ set $\D(g)=g \otimes g$ and $\e(g)=1;$ for a
non-trivial path $p=a_n \cdots a_1,$ set
$$\D(p)=t(a_n) \otimes p + \sum_{i=1}^{n-1}a_n \cdots a_{i+1}
\otimes a_i \cdots a_1 +p \otimes s(a_1)$$ and $\e(p)=0.$ This is
the so-called path coalgebra of the quiver $Q.$

There exists on $\k Q$ an intuitive length gradation $\k
Q=\bigoplus_{n \geqslant 0}\k Q_n,$ and with which the
comultiplication $\D$ complies perfectly. It is clear that the path
coalgebra $\k Q$ is pointed, and the set of group-like elements
$G(\k Q)$ is $Q_0.$  Moreover, the coradical filtration of $\k Q$ is
$$ \k Q_0 \subseteq \k Q_0 \oplus \k Q_1 \subseteq \k Q_0 \oplus \k Q_1
\oplus \k Q_2 \subseteq \cdots \ ,$$ therefore it is coradically
graded in the sense of Chin-Musson \cite{cm}.

The path coalgebra is exactly the dual notion of the path algebra of
a quiver, which is certainly more familiar. Dual to the freeness of
path algebras, path coalgebras are cofree. Precisely, for an
arbitrary quiver $Q,$ the vector space $\k Q_0$ is a subcoalgebra of
$\k Q,$ and over the vector space $\k Q_1$ there is an induced $\k
Q_0$-bicomodule structure via
$$\d_L(a)=t(a) \otimes a, \quad \d_R(a)=a \otimes s(a)$$ for each $a
\in Q_1;$ the path coalgebra has another presentation as the
so-called cotensor coalgebra (see, e.g. \cite{oz,w})
$$\operatorname{CoT}_{\k Q_0}(\k Q_1)=\k Q_0 \oplus \k Q_1 \oplus \k
Q_1 \square \k Q_1 \oplus \cdots $$ and hence enjoys the following

\texttt{Universal Mapping Property}: Let $f: C \longrightarrow \k Q$
be a coalgebra map, $\pi_n: \k Q \longrightarrow \k Q_n$ the
canonical projection and set $f_n:=\pi_n \circ f: \ C
\longrightarrow \k Q_n$ for each $n \ge 0.$ Then $f_0$ is a
coalgebra map; $f_1$ is a $\k Q_0$-bicomodule map, where the $\k
Q_0$-bicomodule structure on $C$ is induced by $f_0;$ and for each
$n \ge 2,$ $f_n$ can be written as $f_1^{\otimes n} \circ
\D_C^{(n-1)},$ where $\D_C^{(n-1)}$ is the $(n-1)$-iterated action
of the comultiplication of $C.$ On the contrary, given a coalgebra
map $f_0 :\ C \longrightarrow \k Q_0$ and a $\k Q_0$-bicomodule map
$f_1:\ C \longrightarrow \k Q_1,$ set $f_n=f_1^{\otimes n} \circ
\D_C^{(n-1)}$ for each $n \ge 2,$ then as long as $f:=\sum_{n \ge
0}f_n$ is well-defined, it is the unique coalgebra map $f:\ C
\longrightarrow \k Q$ such that $f_0=\pi_0 \circ f$ and $f_1=\pi_1
\circ f.$

Let $Q$ be a quiver. A representation of $Q$ is a collection
$$V=(V_g,V_a)_{g \in Q_0, a \in Q_1}$$ consisting of a vector space
$V_g$ for each vertex $g$ and a linear map $V_a: V_{s(a)} \To
V_{t(a)}$ for each arrow $a.$ A morphism of representations $\phi: V
\To W$ is a collection $\phi=(\phi_g)_{g \in Q_0}$ of linear maps
$\phi_g:V_g \To W_g$ for each vertex $g$ such that
$W_a\phi_{s(a)}=\phi_{t(a)}V_a$ for each arrow $a.$ The category of
representations of $Q$ is denoted by $\Rep(Q).$ Given a
representation $V$ of $Q$ and a path $p,$ we define $V_p$ as
follows. If $p$ is trivial, say $p=g \in Q_0,$ then put
$V_p=\Id_{V_g}.$ For a non-trivial path $p=a_n \cdots a_2a_1,$ put
$V_p=V_{a_n} \cdots V_{a_2}V_{a_1}.$ A representation $V$ of $Q$ is
said to be locally nilpotent if for all $g \in Q_0$ and all $x \in
V_g,$ there exist at most finitely many paths $p$ with source $g$ satisfying $V_p(x) \neq
0.$ A representation is called locally finite if it is a directed union of finite-dimensional representations. We denote by $\Rep^{lnlf}(Q)$ the full subcategory of locally
nilpotent and locally finite representations of $\Rep(Q).$ It is well-known that the
category of right $\k Q$-comodules is equivalent to $\Rep^{lnlf}(Q),$
see \cite{ks}.

\section{Quiver Bialgebras}

In this section we show that the path coalgebra of an arbitrary
quiver can be endowed with natural bialgebra structures. A Gabriel
type theorem for pointed bialgebras is also given. Some examples are
presented.

We start with the definition of bialgebra bimodules. Let $B$ be a
bialgebra. A $B$-bialgebra bimodule is a vector space $M$ which is a
$B$-bimodule and simultaneously a $B$-bicomodule such that the
$B$-bicomodule structure maps are $B$-bimodule maps, or
equivalently, the $B$-bimodule structure maps are $B$-bicomodule
maps.

\begin{lemma}
Let $Q$ be a quiver. The associated path coalgebra $\k Q$ admits a
bialgebra structure if and only if $Q_0$ has a monoid structure and
$\k Q_1$ can be given a $\k Q_0$-bialgebra bimodule structure.
Moreover, the set of graded bialgebra structures on the path
coalgebra $\k Q$ is in one-to-one correspondence with the set of
pairs $(S,M)$ in which $S$ is a monoid structure on $Q_0$ and $M$ is
a $\k S$-bialgebra bimodule structure on $\k Q_1.$
\end{lemma}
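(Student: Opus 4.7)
The key tool is the universal mapping property of the path coalgebra $\k Q = \CoT_{\k Q_0}(\k Q_1)$ recalled above. My plan is to first establish the bijection for graded bialgebra structures, and then reduce the ungraded statement to the graded one by passing to the associated graded coalgebra.

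For the forward direction in the graded case, start with a graded bialgebra $(\k Q, m, u)$. Since $m$ sends group-like elements to group-like elements and the gradation forces $m(\k Q_0 \otimes \k Q_0) \subseteq \k Q_0$, the restriction $m|_{Q_0 \times Q_0}$ equips $Q_0$ with a monoid structure $S$, with identity $u(1) \in \k Q_0$. Similarly $m(\k Q_0 \otimes \k Q_1)$ and $m(\k Q_1 \otimes \k Q_0)$ land in $\k Q_1$, endowing $\k Q_1$ with a $\k S$-bimodule structure. Compatibility with the natural $\k Q_0$-bicomodule structure on $\k Q_1$ recalled in Section~2 amounts to the statement that these actions are bicomodule maps, which is precisely the coalgebra-map property of $m$ on the bidegrees $(0,1)$ and $(1,0)$.

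For the converse, given a monoid structure $S$ on $Q_0$ and a $\k S$-bialgebra bimodule structure on $\k Q_1$, I build a multiplication $m : \k Q \otimes \k Q \to \k Q$ by invoking the universal property on $C := \k Q \otimes \k Q$. Set $m_0 := \mu \circ (\pi_0 \otimes \pi_0)$, where $\mu$ is the monoid multiplication on $\k S$, and let $m_1$ be the sum of the left action $\k Q_0 \otimes \k Q_1 \to \k Q_1$ and the right action $\k Q_1 \otimes \k Q_0 \to \k Q_1$, extended by zero on the remaining bidegrees. That $m_0$ is a coalgebra map is routine; that $m_1$ is a $\k Q_0$-bicomodule map, with the bicomodule structure on $C$ induced from $m_0$, is exactly the bialgebra bimodule compatibility. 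Defining $m_n := m_1^{\otimes n} \circ \D_C^{(n-1)}$, each $m_n$ vanishes outside the bidegrees $(i,j)$ with $i+j = n$, so $m := \sum_n m_n$ is well-defined on every element of $C$ and, by the universal property, is the unique coalgebra map with the prescribed low-degree data. The unit $u$ is prescribed by $1 \mapsto 1_S$.

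Associativity of $m$ is then handled by a second application of the universal property: both $m \circ (m \otimes \Id)$ and $m \circ (\Id \otimes m)$ are coalgebra maps $\k Q^{\otimes 3} \to \k Q$, so by uniqueness it is enough to check that their degree-$0$ and degree-$1$ projections agree, which reduces respectively to associativity of $S$ and to the bimodule associativity and left-right commutativity axioms of $\k Q_1$. The unit axioms reduce analogously to $1_S$ acting trivially. For the ungraded statement, use that $\k Q$ is coradically graded, so any bialgebra structure on $\k Q$ induces a graded bialgebra structure on $\gr(\k Q) = \k Q$, to which the graded correspondence applies. I expect the main obstacle to be the bookkeeping on the converse side: checking that the four compatibilities built into the bialgebra bimodule axioms correspond precisely to $m_1$ being a $\k Q_0$-bicomodule map across each of the bidegrees $(0,0),(0,1),(1,0),(1,1)$ of $C$, and that this in turn is exactly what the universal property needs in order to upgrade $m$ to an associative multiplication.
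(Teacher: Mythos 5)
Your proposal is correct and follows essentially the same route as the paper: reduce the general case to the coradically graded one via the coradical filtration, read off the monoid structure on $Q_0$ and the bialgebra bimodule structure on $\k Q_1$ from the low-degree components of the multiplication, and conversely reconstruct the multiplication from $M_0$ and $M_1$ by the universal mapping property of the cotensor coalgebra (Nichols' method), checking associativity by a second application of the same uniqueness argument. The only difference is organizational (you state the graded bijection first and derive existence from it), and your formula $m_n = m_1^{\otimes n}\circ \D_C^{(n-1)}$ is in fact the correct order of composition where the paper's displayed version contains a typo.
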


\begin{proof}
Assume first that the path coalgebra $\k Q$ admits a bialgebra
structure. By considering its graded version induced by the
coradical filtration (see, e.g. \cite{mont1,n}), we can assume
further that the bialgebra structure on $\k Q$ is coradically
graded. Note that the identity $1$ is a group-like, so $1$ lies in
$Q_0$ which is the set of group-like elements of $\k Q.$ For any two
elements $g,h \in Q_0,$ we have $\D(gh)=\D(g)\D(h)=gh \otimes gh$
and $\e(gh)=\e(g)\e(h)=1,$ therefore $gh \in Q_0.$ Hence the
restriction of the multiplication of $\k Q$ onto $Q_0$ gives rise to
a monoid structure. The $\k Q_0$-bicomodule structure on $\k Q_1$ is
given as in Subsection 2.3. The multiplication of $\k Q_0$ provides
a bimodule structure on $\k Q_1.$ Finally note that the axioms for
bialgebras guarantee that the so-defined $\k Q_1$ is a $\k
Q_0$-bialgebra bimodule.

Conversely, assume that $Q_0$ can be endowed a monoid structure and
the vector space $\k Q_1$ has a $\k Q_0$-bialgebra bimodule
structure. By the method of Nichols \cite{n}, these data can be used
to construct a graded bialgebra structure on the path coalgebra $\k
Q$ by the universal mapping property. Nichols' construction was
applied to quiver setting for Hopf algebras in \cite{cr1, cr2,
g,gs}. For completeness we include the construction in below.

The cotensor coalgebra $\CoT_{\k Q_0} (\k Q_1)$ is exactly the path
coalgebra $\k Q.$ The $\k Q_0$-bimodule structure on $\k Q_1$ can be
extended to a multiplication on $\k Q$ via the universal mapping
property of $\k Q.$ Let $M_0: \ \k Q \otimes \k Q \longrightarrow \k
Q_0$ be the composition of the canonical projection $\pi_0 \otimes
\pi_0:\ \k Q \otimes \k Q \longrightarrow \k Q_0 \otimes \k Q_0$ and
the multiplication of the monoid algebra $\k Q_0,$ and $M_1: \k Q
\otimes \k Q \longrightarrow \k Q_1$ the composition of the
canonical projection
\[ \pi_0 \otimes \pi_1 \bigoplus \pi_1 \otimes
\pi_0: \ \k Q \otimes \k Q \longrightarrow \k Q_0 \otimes \k Q_1
\bigoplus \k Q_1 \otimes \k Q_0 \] and the sum of left and right
module actions. Then it is clear that $M_0$ is a coalgebra map and
$M_1$ is a $\k Q_0$-bicomodule map. Let $M_n=\D_2^{(n-1)} \circ
M_1^{\otimes n},$ where $\D_2$ is the coproduct of the tensor
product coalgebra $\k Q \otimes \k Q.$ For any path $p$ of length
$n,$ it is easy to see that $M_l(p)=0$ if $l \ne n.$ Therefore
$M=\sum_{n \ge 0}M_n$ is a well-defined coalgebra map and moreover
respects the length gradation. The associativity for the map $M$ can
be deduced from the associativity of the bimodule action without
difficulty by a standard application of the universal mapping
property as before. The unit map is obvious. Hence we have defined
an associative algebra structure and we obtain a graded bialgebra
structure on $\k Q.$

The latter one-to-one correspondence is obvious.
\end{proof}

Now we state our first main result.
\begin{theorem}
Let $Q$ be a quiver. The associated path coalgebra $\k Q$ always
admits bialgebra structures.
\end{theorem}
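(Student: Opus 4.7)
The approach is to apply Lemma 3.1 in its converse direction: it suffices to produce a monoid structure on $Q_0$ together with a $\k Q_0$-bialgebra bimodule structure on $\k Q_1$ that extends the canonical bicomodule structure recalled in Subsection 2.3.

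The monoid structure on $Q_0$ is cheap. If $Q_0 = \emptyset$ then $\k Q = 0$ and the claim is vacuous. Otherwise fix any vertex $e \in Q_0$ and define $g * h = g$ for $g \ne e$ and $e * h = h$; a short case-check shows that $(Q_0, *, e)$ is a monoid with two-sided identity $e$. (Any monoid structure on $Q_0$ will in fact do, but this one is the simplest and exists for every non-empty set.)

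For the $\k Q_0$-bialgebra bimodule structure on $\k Q_1$, I would use the ``unit-split'' bimodule action: $e \cdot a = a = a \cdot e$ for every $a \in Q_1$, and $g \cdot a = 0 = a \cdot g$ for all $g \in Q_0 \setminus \{e\}$. Thanks to the rule $g * h = g$ for $g \ne e$, this is an associative $\k Q_0$-bimodule. The real point is then to verify that the canonical coactions $\d_L(a) = t(a) \otimes a$ and $\d_R(a) = a \otimes s(a)$ are $\k Q_0$-bimodule maps, where $\k Q_0 \otimes \k Q_1$ and $\k Q_1 \otimes \k Q_0$ carry the tensor-product bimodule structure induced by the bialgebra $\k Q_0$ (which, being a monoid algebra, is cocommutative with group-like basis $Q_0$, so the diagonal action on the tensor product is simply $g \cdot (h \otimes a) = gh \otimes g \cdot a$). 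For example, the left-module check on $\d_L$: if $g \ne e$, then $\d_L(g \cdot a) = \d_L(0) = 0$, while $g \cdot \d_L(a) = g t(a) \otimes g \cdot a = g t(a) \otimes 0 = 0$; for $g = e$ both sides equal $\d_L(a)$. The remaining three analogous checks (right-module on $\d_L$, and both on $\d_R$) are symmetric.

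I expect the main obstacle to be conceptual rather than computational: one has to pin down the correct tensor-product bimodule convention on $\k Q_0 \otimes \k Q_1$ implicit in the definition of a bialgebra bimodule so that the required compatibility is the diagonal one verified above, and not a spurious non-compatibility one might mistakenly write down. Once this is fixed, Lemma 3.1 packages the data $(Q_0, *)$ together with $\k Q_1$ into a graded bialgebra structure on $\k Q$ without further work.
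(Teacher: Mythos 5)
Your proposal is correct and follows essentially the same route as the paper: both reduce to Lemma 3.1 and take the $\k Q_0$-bimodule action on $\k Q_1$ to be the identity for a distinguished vertex $e$ and zero for every other vertex, so that the canonical bicomodule maps are trivially bimodule maps. The only divergence is the monoid structure on $Q_0$ away from $e$ --- the paper adjoins a two-sided ``zero'' element $z$ and sends all products of non-identity elements to $z$, whereas you use the left-projection rule $g*h=g$ for $g\neq e$ --- but this choice is immaterial, since the bialgebra-bimodule verification only uses that $e$ is the identity and that the action vanishes off $e$.
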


\begin{proof}
By Lemma 3.1, it is enough to provide a monoid structure on $Q_0$
and a $\k Q_0$-bialgebra bimodule on $\k Q_1.$ In the first place,
we fix the $\k Q_0$-bicomodule structure on $\k Q_1$ as in
Subsection 2.3. If $Q_0$ has only one element, then let it be the
unit group and take the trivial $\k Q_0$-bimodule structure on $\k
Q_1.$ Obviously, this defines a necessary bialgebra bimodule. Now we
assume $Q_0$ contains at least 2 elements. Take an arbitrary
element, say $e \in Q_0,$ and set it to be the identity, i.e., let
$eg=g=ge$ for all $g \in Q_0.$ Take any element $z \in Q_0$
other than $e,$ and make it to be a ``zero" element. That is, let
$gz=z=zg$ for any $g \in Q_0.$ For any two elements $g,h \in
Q_0-\{e,z\},$ set $gh=z.$ Here, $g=h$ is allowed. One can verify
without difficulty that this endows $Q_0$ with a monoid structure.
For the $\k Q_0$-bimodule structure on $\k Q_1,$ define $$e.a=a=a.e,
\quad f.a=0=a.f$$ for all $a \in Q_1$ and all $f \in
Q_0-\{e\}.$ Clearly, the bicomodule structure maps are bimodule maps
and hence we have obtained a $\k Q_0$-bialgebra bimodule structure
on $\k Q_1.$
\end{proof}

Note that, in the proof of the previous theorem we provide only a
very ``trivial" example of graded bialgebra structures for the path
coalgebra. The classification of all the graded bialgebra structures
on $\k Q,$ or equivalently the classification of all the suitable
monoid structures on $Q_0$ and $\k Q_0$-bialgebra bimodule
structures on $\k Q_1,$ is still not clear and is a very interesting
problem of quiver combinatorics.

However, if $Q_0$ can be afforded a group structure and further $\k
Q_1$ can be afforded a corresponding bialgebra bimodule structure
over $\k Q_0,$ then the situation is much clearer. Indeed, in this
case $\k Q_0$ becomes a Hopf algebra and $\k Q_1$ becomes a $\k
Q_0$-Hopf bimodule \cite{n}. Now the the fundamental theorem of Hopf
modules of Sweedler \cite[Theorem 4.1.1]{sw} can be applied, and the
category of $\k Q_0$-Hopf bimodules is proved to be equivalent to
the direct product of the representation categories of a class of
subgroups of $Q_0$ by Cibils and Rosso \cite{cr1}. Note that, a
quiver $Q$ with $Q_0$ having a group structure and $\k Q_1$ having a
$\k Q_0$-Hopf bimodule structure is far from arbitrary. Such quivers
are called covering quivers in \cite{gs} and Hopf quivers in
\cite{cr2}. It would be of interest to generalize the classification
of Hopf bimodules over groups to that of bialgebra bimodules over
monoids.

For later use, we record the multiplication formula of paths, given
by Rosso's quantum shuffle product \cite{rosso}, for quiver
bialgebras. Given a quiver $Q,$ take a suitable monoid structure on
$Q_0$ and a $\k Q_0$-bialgebra bimodule structure on $\k Q_1.$ Let
$p$ be a path of length $l.$ An $n$-thin split of it is a sequence
$(p_1, \ \cdots, \ p_n)$ of vertices and arrows such that the
concatenation $p_n \cdots p_1$ is exactly $p.$ These $n$-thin splits
of $p$ are in one-to-one correspondence with the $n$-sequences of
$(n-l)$ 0's and $l$ 1's. Denote the set of such sequences by
$D_l^n.$ Clearly $|D_l^n|={n \choose l}.$ For $d=(d_1, \ \cdots, \
d_n) \in D_l^n,$ the corresponding $n$-thin split is written as
$dp=((dp)_1, \ \cdots, \ (dp)_n),$ in which $(dp)_i$ is a vertex if
$d_i=0$ and an arrow if $d_i=1.$

Let $\alpha=a_m \cdots a_1$ and $\beta=b_n \cdots b_1$ be a pair of
paths of length $m$ and $n$ respectively. Let $d \in D_m^{m+n}$ and
$\bar{d} \in D_n^{m+n}$ the complement sequence which is obtained
from $d$ by replacing each 0 by 1 and each 1 by 0. Define an element
in $\k Q_1^{\square_{m+n-1}},$ or equivalently in $\k Q_{m+n},$
$$(\alpha \cdot \beta)_d=[(d\alpha)_{m+n}.(\bar{d}\beta)_{m+n}] \cdots
[(d\alpha)_1.(\bar{d}\beta)_1],$$ where
$[(d\alpha)_i.(\bar{d}\beta)_i]$ is understood as the action of $\k
Q_0$-bimodule on $\k Q_1$ and these terms in different brackets are
put together by cotensor product, or equivalently concatenation. In
these notations, the formula of the product of $\alpha$ and $\beta$
is given as follows:
\begin{equation}
\alpha \cdot \beta=\sum_{d \in D_m^{m+n}}(\alpha \cdot \beta)_d \ .
\notag
\end{equation}

Now we give a Gabriel type theorem for general pointed bialgebras,
which is in the same vein as the results of Chin and Montgomery
\cite{cm} for pointed coalgebras and of Van Oystaeyen and Zhang
\cite{oz} for pointed Hopf algebras. Let $B$ be a pointed bialgebra
and $\{B_i\}_{i \ge 0}$ be its coradical filtration. Let $\gr B =
B_0 \bigoplus B_1/B_0 \bigoplus B_2/B_1 \bigoplus \cdots \cdots$ be
the associated coradically graded bialgebra induced by the coradical
filtration.

\begin{proposition}
Let $B$ be a pointed bialgebra and $\gr B$ be its coradically graded
version. There exist a unique quiver $Q$ and a unique graded
bialgebra structure on $\k Q$ such that $\gr B$ can be realized as a
sub-bialgebra of $\k Q$ with $\k Q_0 \oplus \k Q_1 \subseteq \gr B.$
\end{proposition}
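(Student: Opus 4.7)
The plan is to combine the Gabriel-type theorem for pointed coalgebras of Chin--Montgomery with the correspondence established in Lemma 3.1. First I would extract the quiver $Q$ from $\gr B$ as a pointed coalgebra: by the Chin--Montgomery theorem, there is a unique quiver $Q$ with $Q_0 = G(B)$ and arrows from $g$ to $h$ forming a basis of the nontrivial $(h,g)$-skew-primitives, together with a unique injective coalgebra map $\iota : \gr B \hookrightarrow \k Q$ whose image contains $\k Q_0 \oplus \k Q_1$. Concretely, under $\iota$ the coradical $\gr B_0$ is identified with $\k Q_0$ and the degree-one piece $\gr B_1 / \gr B_0$ is identified with $\k Q_1$ as a $\k Q_0$-bicomodule.

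Next I would upgrade $\iota$ from a coalgebra-theoretic embedding to a bialgebra-theoretic one, by transporting the multiplicative structure of $\gr B$ to appropriate pieces. Because $\gr B$ is a coradically graded bialgebra, multiplication respects degrees, so the restriction to $\gr B_0 \otimes \gr B_0 \to \gr B_0$ makes $Q_0 = G(B)$ into a monoid $S$, and the restrictions $\gr B_0 \otimes \gr B_1 \to \gr B_1$ and $\gr B_1 \otimes \gr B_0 \to \gr B_1$ equip $\k Q_1$ with a $\k S$-bimodule structure compatible with its bicomodule structure --- that is, with a $\k S$-bialgebra bimodule structure. Applying the construction of Lemma 3.1 to the pair $(S, \k Q_1)$ then furnishes a graded bialgebra structure on $\k Q$.

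The crux of the argument is then to verify that $\iota$ is a bialgebra map with respect to this new multiplication on $\k Q$. Here I would invoke the universal mapping property of the cotensor coalgebra. Both composites
\[
\gr B \otimes \gr B \xrightarrow{m_{\gr B}} \gr B \xrightarrow{\iota} \k Q
\qquad\text{and}\qquad
\gr B \otimes \gr B \xrightarrow{\iota \otimes \iota} \k Q \otimes \k Q \xrightarrow{m_{\k Q}} \k Q
\]
are coalgebra maps out of $\gr B \otimes \gr B$, so by the universal mapping property they coincide as soon as they agree after composition with the projections $\pi_0$ and $\pi_1$. Agreement in degree $0$ reduces to the monoid multiplication on $Q_0$, and agreement in degree $1$ reduces to the bimodule action on $\k Q_1$ --- both of which are defined precisely so that the two composites match. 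This is the step I expect to require the most care, since one has to track how the grading of $\gr B$ interacts with the filtration-free description of $\k Q$ via its universal property.

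Finally, uniqueness is essentially bookkeeping. The quiver $Q$ is already unique from Chin--Montgomery, and by the second assertion of Lemma 3.1 the graded bialgebra structure on $\k Q$ is determined by the pair $(S, \k Q_1)$, which in turn is forced by the requirement that $\iota$ be a bialgebra map restricting to the given structure on $\k Q_0 \oplus \k Q_1 \subseteq \gr B$. Hence the quiver and the bialgebra structure on $\k Q$ are both uniquely determined by $B$.
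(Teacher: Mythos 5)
Your proposal is correct and follows essentially the same route as the paper: identify $Q_0$ with the group-likes, endow $\k Q_1 \simeq B_1/B_0$ with the induced $\k Q_0$-bialgebra bimodule structure, invoke Lemma 3.1 for the graded bialgebra structure on $\k Q$, and use the universal mapping property of the cotensor coalgebra to check that the embedding is multiplicative by comparing the two composites in degrees $0$ and $1$. The only cosmetic difference is that you import the coalgebra embedding from Chin--Montgomery as a black box, while the paper constructs $\Theta$ explicitly from $\pi_0$ and $\pi_1$ via the universal mapping property and deduces injectivity from the Heyneman--Radford theorem.
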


\begin{proof}
Note that $\gr B$ is still pointed and its coradical is $B_0.$ Let
$Q_0$ be the set of group-like elements of $\gr B.$ Then clearly
$Q_0$ is a monoid and $B_0=\k Q_0$ as bialgebras, where the latter
is the usual monoid bialgebra. Induced from the graded bialgebra
structure, $B_1/B_0$ is a $B_0=\k Q_0$-bialgebra bimodule. The
bicomodule structure maps are denoted as $\d_L$ and $\d_R.$ As a $\k
Q_0$-bicomodule, $B_1/B_0$ is in fact a $Q_0$-bigraded space.
Namely, write $M=B_1/B_0,$ then
$$M=\bigoplus_{g,h \in Q_0}\ ^gM^h,$$ where $^gM^h=\{m \in M |
\d_L(m)=g \otimes m, \ \d_R(m)=m \otimes h \}.$ We attach to these
data a quiver $Q$ as follows. Let the set of vertices be $Q_0.$ For
all $g,h \in Q_0,$ let the number arrows with source $h$ and
target $g$ be equal to the $\k$-dimension of the isotypic space
$^gM^h.$ Now we have obtained a quiver $Q$ and moreover $\k Q_1$ has
a $\k Q_0$-bialgebra bimodule structure which is identical to the
$B_0$-bialgebra bimodule $B_1/B_0.$ The $\k Q_0$-bialgebra bimodule
$\k Q_1$ gives rise to a unique graded bialgebra structure on $\k
Q.$ By the universal mapping property, the coalgebra map $\gr B
\stackrel{\pi_0}{\To} B_0 \simeq \k Q_0$ and the $\k Q_0$-bicomodule
map $\gr B \stackrel{\pi_1}{\To} B_1/B_0 \simeq \k Q_1$ determine a
unique coalgebra map $\Theta: \gr B \To \k Q.$ Here, by $\pi_i$ we
denote the canonical projection $\gr B \To B_i/B_{i-1}.$ By a
theorem of Heyneman and Radford, see e.g. \cite[Theorem
5.3.1]{mont1}, the coalgebra map $\Theta$ is injective since its
restriction to the first term of the coradical filtration is
injective. Again, by the universal mapping property, one can show
that the map $\Theta$ is also an algebra map. Therefore, $\Theta$ is
actually an embedding of bialgebras. The last condition $\k Q_0
\oplus \k Q_1 \subseteq \gr B$ guarantees that the quiver $Q$ is
unique.
\end{proof}

In the following we give some examples of bialgebras on the path
coalgebras of quivers.

\begin{example} \rm
Let $\K_n$ be the $n$-Kronecker quiver, i.e. a quiver of the form
$$\xy (0,0)*{\bullet}; {\ar (2,2)*{}; (28,2)*{}}; {\ar (2,-2)*{}; (28,-2)*{}};
(15,0)*{.}; (15,1)*{.}; (15,-1)*{.}; (30,0)*{\bullet} \endxy$$
Denote the arrows as $a_1, \cdots, a_n.$ Let the source vertex be
$e$ and the target vertex be $z$ as in Theorem 3.2. Then there is a
bimodule structure on the space spanned by $\{a_i\}_{i=1}^n$ over
the monoid $\{ e, z\}$ defined by $$ e.a_i=a_i=a_i.e, \quad
z.a_i=0=a_i.z$$ for all $i.$ We have the following multiplication
formulas for the quiver bialgebra $\k \K_n:$ $$ a_i \cdot a_j=0,
\quad \forall \ 1 \le i,j \le n.$$
\end{example}

\begin{example}\rm
Let $\S_n$ be the $n$-subspace quiver, i.e. $$\xy (0,0)*{\bullet};
(20,0)*{\bullet}; (10,10)*{\bullet}; {\ar (1,1)*{}; (9,9)*{}}; {\ar
(19,1)*{}; (11,9)*{}}; (9,0)*{.}; (10,0)*{.}; (11,0)*{.}; (7,0)*{.};
(8,0)*{.}; (12,0)*{.}; (13,0)*{.}; \endxy $$ Denote the target
vertex by $e,$ the source vertices by $f_1, \cdots, f_n,$  and the
corresponding arrows by $a_1, \cdots, a_n.$ Assign $e$ to be the
identity, $f_1$ to be the ``zero" element, we get a monoid structure
on the set of vertices as in Theorem 3.2. The bimodule structure is
defined similarly $$ e.a_i=a_i=a_i.e, \quad f_i.a_j=0=a_j.f_i$$ for
all $1 \le i,j \le n.$ The multiplication of the quiver bialgebra
$\k \S_n$ is similar: $a_i \cdot a_j=0, \ \forall \ i,j.$
\end{example}

\begin{example}\rm
Let $\A_\infty$ be the quiver $$\xy (0,0)*{\bullet};
(10,0)*{\bullet}; (20,0)*{\bullet}; (30,0)*{\bullet};
(40,0)*{\bullet}; (47,0)*{\cdots\cdots}; {\ar (1,0)*{}; (9,0)*{}};
{\ar (11,0)*{}; (19,0)*{}}; {\ar (21,0)*{}; (29,0)*{}}; {\ar
(31,0)*{}; (39,0)*{}};
\endxy$$ Index the vertices $\g_i,$ from left to right, by the set of natural numbers
$\mathbb{N}=\{0,1,2,\cdots\cdots\}$ and consider the additive monoid
structure, i.e. $g_ig_j=g_{i+j}.$ Denote the arrow $g_i \To g_{i+1}$
by $a_i.$ Define a bialgebra bimodule structure on the space of
arrows by $$g_i.a_j=a_{i+j}, \quad a_j.g_i=q^ia_{i+j}$$ for all $i,j
\in \mathbb{N},$ where $q \in \k-\{0\}$ is a parameter. Assume
further that $q$ is not a root of unity. By a routine verification
one can show that the axioms of bialgebra bimodules are satisfied.
Let $p_i^l$ denote the path $a_{i+l-1} \cdots a_{i+1}a_i$ if $l \ge
1,$ or $g_i$ if $l=0.$ Apparently, $\{p_i^l\}_{i,l \ge 0}$ is a
basis of $\k \A_\infty.$ Then using Subsection 3.4 as well as
induction we have the following multiplication formula
$$ p_i^l \cdot p_j^m = q^{jl}{l+m \choose m}_q p_{i+j}^{l+m}$$ for
all $i,j,l,m \in \mathbb{N}.$ Here we use the quantum binomial
coefficients as in \cite{kassel}. For completeness, we recall their definition. For
any $q \in \k,$ integers $l,m \ge 0,$ define
\[
l_q=1+ q + \cdots +q^{l-1}, \ \ l!_q=1_q \cdots l_q, \ \ \mathrm{and} \ \
\binom{l+m}{l}_q=\frac{(l+m)!_q}{l!_qm!_q}.
\]
Clearly the quiver bialgebra $\k
\A_\infty$ is generated as an algebra by $g_1$ and $a_0$ with
relation $a_0g_1=qg_1a_0.$ Therefore, $\k \A_\infty$ is exactly the
quantum plane of Manin \cite{manin} and the bialgebra structure
given here is identical to that in \cite[P118]{kassel}.
\end{example}

We remark that the quivers in the previous examples admit no Hopf
algebra structures, as they are certainly not Hopf quivers. For
simple quivers as in Examples 3.4 and 3.5, it is not difficult to
classify all the possible graded bialgebra structures on the path
coalgebras. The bialgebra structure given in Example 3.6 is
different from the ``trivial" one as given in the proof of Theorem
3.2.

Though theoretically any graded pointed bialgebra can be obtained as
a large sub-bialgebra of a quiver bialgebra, there seems no chance
to present a general method for such construction. However, in the
following we will show that a large class of pointed bialgebras
whose coradical filtration has length 2 can be constructed
systematically.

\begin{proposition}
Let $Q$ be a quiver. Assume that in $Q_0$ there is a sink (i.e.,
admitting only incoming arrows) or a source (i.e., admitting only
outgoing arrows). Then there exists on $\k Q$ a bialgebra structure
such that $\k Q_0 \oplus \k Q_1$ becomes its sub-bialgebra.
\end{proposition}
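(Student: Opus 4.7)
By Lemma 3.1, endowing $\k Q$ with a graded bialgebra structure amounts to choosing a monoid structure on $Q_0$ together with a $\k Q_0$-bialgebra bimodule structure on $\k Q_1$, the resulting multiplication being Rosso's quantum shuffle product of Subsection 3.4. For any such structure, the coproduct of a vertex or of an arrow already lies in $(\k Q_0 \oplus \k Q_1)^{\otimes 2}$, so $\k Q_0 \oplus \k Q_1$ is automatically a subcoalgebra of $\k Q$. Products of two vertices stay in $\k Q_0$ and mixed products of a vertex with an arrow stay in $\k Q_1$, so the only remaining condition for $\k Q_0 \oplus \k Q_1$ to be a sub-bialgebra is that the shuffle product $a \cdot b$ vanish in $\k Q_2$ for every pair $a, b \in Q_1$. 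The plan is to exploit the sink/source hypothesis to force precisely this vanishing.

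Let $e \in Q_0$ be the prescribed source or sink. If $Q_0 = \{e\}$ then $Q_1 = \emptyset$ and $\k Q = \k$ is trivially a bialgebra, so assume $|Q_0| \geq 2$ and fix any $z \in Q_0 \setminus \{e\}$. Equip $Q_0$ with the monoid structure having $e$ as identity and $gh = z$ for all $g, h \in Q_0 \setminus \{e\}$ (associativity is routine). Keep the bicomodule structure on $\k Q_1$ from Subsection 2.3, and define the $\k Q_0$-bimodule action by
\[
e . a = a = a . e, \qquad g . a = 0 = a . g \quad \text{for all } a \in Q_1,\ g \in Q_0 \setminus \{e\}.
\]
Since only the identity $e$ acts nontrivially, the bigrading of $\k Q_1$ by $(t(a), s(a))$ is automatically preserved, and a verification parallel to the one in Theorem 3.2 shows that these data form a $\k Q_0$-bialgebra bimodule.

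Plugging into Rosso's formula, the shuffle product of two arrows reads
\[
a \cdot b \;=\; [\,t(a) . b\,]\,[\,a . s(b)\,] \;+\; [\,a . t(b)\,]\,[\,s(a) . b\,].
\]
If $e$ is a source, then $t(a), t(b) \neq e$ (no arrow has target $e$), which forces $t(a) . b = 0$ and $a . t(b) = 0$, so both summands vanish. If $e$ is a sink, then $s(a), s(b) \neq e$ (no arrow has source $e$), which forces $a . s(b) = 0$ and $s(a) . b = 0$, and again both summands vanish. In either case $a \cdot b = 0$, so $\k Q_0 \oplus \k Q_1$ is closed under the multiplication of $\k Q$ and is a sub-bialgebra as required. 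The conceptual content of the argument lies entirely in recognizing that the sink/source hypothesis is the exact asymmetry needed to kill one factor in each shuffle term; once that is observed, the combinatorics collapse and the verifications reduce to the standard ones from Lemma 3.1 and Theorem 3.2.
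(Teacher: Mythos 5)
Your proof is correct and follows essentially the same route as the paper: fix the sink or source as the identity $e$ of the monoid from Theorem 3.2, observe that only arrow--arrow products could leave $\k Q_0 \oplus \k Q_1$, and use the shuffle formula $a \cdot b = [t(a).b][a.s(b)] + [a.t(b)][s(a).b]$ together with the fact that only $e$ acts nontrivially to see both terms vanish. Your explicit handling of the case $|Q_0|=1$ and the case split (source kills the left factors via $t(a),t(b)\neq e$; sink kills the right factors via $s(a),s(b)\neq e$) is a slightly more detailed version of the paper's observation that each term survives only if two vertices both equal $e$, which the sink/source hypothesis forbids.
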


\begin{proof}
By assumption, there is a sink or a source in $Q_0.$ Now fix such a
vertex to be the identity $e$ of the monoid on $Q_0$ and take the
graded bialgebra structure on $\k Q$ as given in the proof of
Theorem 3.2. Let $B=\k Q_0 \oplus \k Q_1.$ We claim that $B$ is a
sub-bialgebra of $\k Q.$ In fact, we only need to verify that the
multiplication of arrows is closed in $B.$ Given an arbitrary pair
of arrows $a: g \To h$ and $b: u \To v,$ by Subsection 3.4 the
multiplication is $$a \cdot b = [a.v][g.b] + [h.b][a.u].$$ Clearly
the term $[a.v][g.b]$ survives, namely not =0, only if $v=g=e,$ but
this contradicts with the assumption that $e$ is a sink or a source.
Similarly we have $[h.b][a.u]=0.$ This shows that the multiplication
of $\k Q$ is indeed closed in $B.$
\end{proof}

\begin{remark}
Coalgebras with coradical filtration having length 2 are studied by Kosakowska and Simson in \cite{ks}, where a reduction to hereditary coalgebras is presented and the Gabriel quiver is discussed in term of irreducible morphisms. The dual of such a coalgebra is
an algebra of radical square zero. The class
of radical square zero algebras is very important in the
representation theory of Artin algebras (see e.g. \cite{ass, ars}). The
dual of the previous proposition asserts that every elementary
radical square zero algebra with condition $\Ext^1(S,-)=0,$ or
$\Ext^1(-,S)=0$ for some simple module $S$ has a bialgebra
structure, therefore its module category has natural tensor product.
\end{remark}

\section{Monoidal Structures over Quiver Representations}

In this section we consider the natural monoidal structures on the
categories of locally nilpotent representations of quivers arising
from bialgebra structures.

Recall that a monoidal categories is a sextuple
$(\C,\otimes,\1,\alpha,\lambda,\rho),$ where $\C$ is a category,
$\otimes: \C \times \C \to \C$ is a functor, $\1$ an object,
$\alpha: \otimes \circ ( \otimes \times \Id) \to \otimes \circ (\Id
\times \otimes), \lambda: \1 \otimes - \to \Id, \rho: - \otimes \1
\to \Id$ are natural isomorphisms such that the associativity and
unitarity constrains hold, or equivalently the pentagon and the
triangle diagrams are commutative, see e.g. \cite{maclane} for
detail.

Natural examples of monoidal structures are the categories of
$B$-modules and $B$-comodules with $B$ a bialgebra, see e.g.
\cite{kassel, mont1}. Recall that, if $U$ and $V$ are right
$B$-comodules and $U \otimes V$ the usual tensor product of
$\k$-spaces, then the comodule structure of $U \otimes V$ is given
by $u \otimes v \mapsto u_0 \otimes v_0 \otimes u_1v_1,$ where we
use the Sweedler notation $u \mapsto u_0 \otimes u_1$ for comodule
structure maps. The unit object is the trivial comodule $\k$ with
comodule structure map $k \mapsto k \otimes 1.$ On the other hand,
by the reconstruction formalism, monoidal categories with fiber
functors are coming in this manner, see e.g. \cite{egno, majid}.

The monoidal categories arising from quiver bialgebras (as their
right comodule categories) share a common property, i.e., their
simple objects all have $\k$-dimension 1 and consist of a monoid.
Stimulated by this and the notion of pointed tensor categories
introduced in \cite{egno}, we call a $\k$-linear monoidal category
pointed if the iso-classes of simple objects constitute a monoid
(under tensor product).

\emph{From now on, the field $\k$ is assumed to be algebraically
closed and the monoidal categories under consideration are
$\k$-linear abelian}. A monoidal category is said to be finite, if
the underlying category is equivalent to the category of
finite-dimensional comodules over a finite-dimensional coalgebra.
This is a natural generalization of the notion of finite tensor
categories of Etingof and Ostrik \cite{eo}.

Classification of finite monoidal categories is a fundamental
problem. Our results in Section 3 indicate that even finite pointed
monoidal categories are ``over" pervasive, it is necessary to impose
proper condition before considering the classification problem. In
the following, by taking advantage of the theory of quivers and
their representations, we give some classification results for some
classes of finite pointed monoidal categories.

An abelian category is said to be hereditary, if the extension
bifunctor $\Ext^n$ vanishes at each degree $n \ge 2.$ Next we
consider hereditary finite pointed monoidal categories. Though the
following results are direct consequences from some well-known
theorems of quivers and representations, we feel it is of interest
to include them here.

\begin{proposition}
A hereditary finite pointed monoidal category with a fiber functor
is equivalent to $(\Rep Q, \F)$ with $Q$ a finite acyclic quiver and
$\F$ the forgetful functor from $\Rep Q$ to the category $\Vec_\k$
of vector spaces.
\end{proposition}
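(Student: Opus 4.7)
The plan is to reduce to the bialgebra setting via Tannakian reconstruction and then to identify the resulting bialgebra, as a coalgebra, with a path coalgebra by applying the Gabriel-type theorem (Proposition 3.3) together with heredity.

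First I would invoke the reconstruction principle alluded to just before the statement: a $\k$-linear abelian monoidal category $\C$ equipped with an exact faithful fiber functor $\F: \C \to \Vec_\k$ is equivalent, as a monoidal category over $\F$, to the category of right comodules over a bialgebra $B=\operatorname{coend}(\F)$, with $\F$ corresponding to the forgetful functor. I would then translate each hypothesis into a structural condition on $B$: finiteness of $\C$ forces $\dim_\k B < \infty$; pointedness of $\C$ says that the iso-classes of simple $B$-comodules are one-dimensional and form a monoid under tensor product, so $B$ is a pointed coalgebra (with group-like monoid $G(B)$); heredity of $\C$ transfers to $\comod B$ and hence means $\Ext^n_B \equiv 0$ for $n \ge 2$.

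Next I would identify $B$ with a path bialgebra. Apply Proposition 3.3 to $B$: its coradically graded version $\gr B$ embeds as a subbialgebra of the path coalgebra $\k Q$ of a canonically defined quiver $Q$, and this embedding satisfies $\k Q_0 \oplus \k Q_1 \subseteq \gr B$. I would then argue that heredity forces $\gr B = \k Q$ and $B \cong \gr B$ as coalgebras. The cleanest way is to quote the known fact (dual to Gabriel's classification of basic hereditary finite-dimensional algebras) that a pointed coalgebra is hereditary if and only if it is isomorphic, as a coalgebra, to the path coalgebra of its Gabriel quiver; alternatively, one can directly observe from Proposition 3.3 that any missing path of length $\ge 2$ produces a nonzero class in $\Ext^2_B$ between the corresponding simples, contradicting heredity, and that the $n$-th piece of the coradical filtration of a hereditary pointed coalgebra splits as a direct sum of the graded pieces. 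Finite-dimensionality of $B \cong \k Q$ then forces $Q$ to be finite and to have no cyclic paths, that is, $Q$ is a finite acyclic quiver.

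Finally, I would assemble the equivalence. By the equivalence of $\comod \k Q$ with $\Rep^{lnlf}(Q)$ recalled at the end of Section~2, and because $Q$ is finite and acyclic (so local nilpotency and local finiteness are automatic for arbitrary representations, and finite-dimensional comodules match finite-dimensional representations), the forgetful functor on $\comod \k Q$ transports to the forgetful functor $\F: \Rep Q \to \Vec_\k$, $V \mapsto \bigoplus_{g \in Q_0} V_g$. Composing everything gives the desired monoidal equivalence $(\C,\F) \simeq (\Rep Q, \F)$. I expect the main obstacle to be the middle step, namely showing rigorously that heredity plus pointedness forces $B$ to be a path coalgebra rather than merely a subcoalgebra of one; the cleanest resolution is to appeal to the (dual) Gabriel theorem for hereditary pointed coalgebras, while the down-to-earth alternative requires a careful $\Ext^2$ computation via the bar resolution built out of the coradical filtration.
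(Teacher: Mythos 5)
Your proposal is correct and follows essentially the same route as the paper: Tannakian reconstruction to a finite-dimensional pointed bialgebra $B$, a Gabriel-type realization of $B$ inside a path coalgebra, heredity forcing $B$ to be the whole path coalgebra, and finite-dimensionality forcing $Q$ to be finite and acyclic. The only (harmless) variation is that you route through Proposition 3.3 applied to $\gr B$, which obliges you to also argue $B \cong \gr B$, whereas the paper applies the Chin--Montgomery coalgebra-level Gabriel theorem directly to $B$ and avoids that extra step.
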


\begin{proof}
By the standard reconstruction process, see e.g. \cite{egno,majid},
a finite monoidal category with fiber functor is equivalent to
$(B-\comod, \F)$ in which $B-\comod$ is the category of
finite-dimensional right comodules over a finite-dimensional
bialgebra $B$ and $\F$ is the forgetful functor. Note that, the
category of right comodules over a finite-dimensional bialgebra $B$
is pointed if and only if $B$ is pointed. Now by the Gabriel type
theorem for pointed coalgebras \cite{cm}, there exists a unique
quiver $Q$ such that $B$ is isomorphic to a large subcoalgebra, i.e.
includes the space spanned by the set of vertices and arrows, of the
path coalgebra $\k Q.$ The hereditary condition of the category
$B-\comod$ forces $B$ to be hereditary as a coalgebra. This
indicates that $B$ is isomorphic to $\k Q$ as a coalgebra. Since $B
\cong \k Q$ is finite-dimensional, the quiver $Q$ must be finite and
acyclic. Obviously, any representation of $Q$ is automatically
locally nilpotent, hence $B-\comod$ is equivalent to $\Rep Q.$
\end{proof}

Now we can use Gabriel's famous classification theorem \cite{gab} on
quivers of finite-representation type, i.e. admitting only finitely
many indecomposable representations up to isomorphism, to describe
hereditary pointed monoidal categories in which there are only
finitely many iso-classes of indecomposable objects. Follow the
terminology of \cite{qha3}, a finite monoidal category is said to be
of finite type if it has only finitely many iso-classes of
indecomposable objects.

\begin{corollary}
A hereditary pointed monoidal category of finite type with a fiber
functor is of the form $\Rep Q$ where $Q$ is a finite disjoint union
of quivers of $\rm ADE$ type.
\end{corollary}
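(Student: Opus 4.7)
The plan is to combine Proposition 4.1 with Gabriel's classical theorem on quivers of finite representation type. By Proposition 4.1, a hereditary finite pointed monoidal category with a fiber functor is equivalent to $(\Rep Q, \F)$ for some finite acyclic quiver $Q$, so the only thing left to pin down is which finite acyclic quivers $Q$ additionally yield a category $\Rep Q$ with only finitely many iso-classes of indecomposable objects.

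First I would observe that, because $Q$ is finite and we consider finite-dimensional representations, $\Rep Q$ decomposes as a direct product indexed by the connected components of $Q$: if $Q = Q^{(1)} \sqcup \cdots \sqcup Q^{(r)}$, then $\Rep Q \simeq \Rep Q^{(1)} \times \cdots \times \Rep Q^{(r)}$, and the indecomposables of $\Rep Q$ are exactly the indecomposables of each factor. Consequently, $\Rep Q$ is of finite type in the sense of the paper if and only if each $\Rep Q^{(i)}$ is of finite representation type in the usual sense of quiver representation theory.

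The main (and only nontrivial) step is then to invoke Gabriel's theorem \cite{gab}: a connected finite acyclic quiver has finite representation type if and only if its underlying undirected graph is a Dynkin diagram of type $A_n$, $D_n$, $E_6$, $E_7$, or $E_8$. Applying this to each connected component $Q^{(i)}$, we conclude that each $Q^{(i)}$ must be an $\rm ADE$ quiver, and hence $Q$ itself is a finite disjoint union of $\rm ADE$ quivers.

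The principal obstacle is not really an obstacle at this level, since both ingredients (Proposition 4.1 and Gabriel's theorem) are already in place; the only thing that needs a word of care is that the monoidal-categorical notion of ``finite type'' used in the statement translates faithfully, via the equivalence of Proposition 4.1, into the classical notion of a quiver of finite representation type. This is immediate from the equivalence of categories produced by Proposition 4.1, which preserves iso-classes of indecomposable objects, so the argument reduces cleanly to the two cited results.
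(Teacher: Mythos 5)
Your argument is correct and is exactly the one the paper intends: the corollary is stated without proof as a direct consequence of Proposition 4.1 combined with Gabriel's classification of quivers of finite representation type, which is precisely what you carry out (including the routine reduction to connected components). No discrepancy to report.
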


Finally we give two examples of quiver monoidal categories. We also work
out their Clebsch-Gordan formula and representation ring
respectively.

\begin{example} \rm
Let $\A_n$ be the quiver $$\xy (0,0)*{\bullet}; (10,0)*{\bullet};
(20,0)*{\bullet}; (32,0)*{\bullet}; (42,0)*{\bullet};
(26,0)*{\cdots\cdots}; {\ar (1,0)*{}; (9,0)*{}}; {\ar (11,0)*{};
(19,0)*{}}; {\ar (33,0)*{}; (41,0)*{}};
\endxy$$ with $n \ge 2$ vertices $v_1, \cdots, v_n$ and $n-1$ arrows $a_1,\cdots,
a_{n-1}$ where $a_i:v_i \To v_{i+1}.$ Set $v_1$ to be the identity,
$v_2$ to be the zero element, take the monoid structure on $\{v_i|1
\le i \le n\}$ as in Theorem 3.2 and consider the corresponding
bialgebra structure with multiplication given by $$v_1 \cdot a_i
=a_i= a_i \cdot v_1, \ v_j \cdot a_i =0= a_i \cdot v_j \ (j \ge 2), \
a_i \cdot a_j=0 .$$ For a pair of integers $i,j$ satisfying $1 \le i
\le j \le n,$ define a representation $V(i,j)$ of $\A_n$ by
$$V(i,j)_{v_k}=\left\{
                \begin{array}{ll}
                  \k, & \hbox{$i \le k \le j$} \\
                  0, & \hbox{otherwise}
                \end{array}
              \right. \quad \rm{and} \quad V(i,j)_{a_k}=\left\{
                                           \begin{array}{ll}
                                             1, & \hbox{$i \le k \le j-1$} \\
                                             0, & \hbox{otherwise}
                                           \end{array}
                                         \right..$$
It is well-known that the set $\{ V(i,j) \ | \ 1 \le i \le j \le n
\}$ is a complete list of indecomposable representations of $\A_n.$
For the Clebsch-Gordan problem of the category $\Rep \A_n,$ it is
enough to consider the decomposition rule of $V(i,j) \otimes V(k,l)$
thanks to the Krull-Schmidt theorem. Given a representation
$V(i,j),$ let $e_s (i \le s \le j)$ denote a basis element of the
vector space $\k$ assigned to the $s$-th vertex. Recall that the
associated comodule structure map for $V(i,j)$ is given by
$$\delta(e_s)=\sum_{x=s}^j e_x \otimes p_s^{x-s},$$ where by $p_x^y$ we denote the path of length $y$ starting at the vertex $x.$
Now for $e_s \otimes e_t \in V(i,j) \otimes V(k,l),$ we have
$$\delta(e_s \otimes e_t)= \left\{
                               \begin{array}{ll}
                                \sum_{x=s}^j e_x \otimes e_t \otimes p_s^{x-s}+\sum_{y=t}^l e_s \otimes e_y \otimes p_t^{y-t} , & \hbox{$s=t=1$;} \\
                                \sum_{y=t}^l e_s \otimes e_y \otimes p_t^{y-t}, & \hbox{$s=1,t \ge 2$;} \\
                                \sum_{x=s}^j e_x \otimes e_t \otimes p_s^{x-s}, & \hbox{$s \ge 2,t=1$;} \\
                                e_s \otimes e_t \otimes v_2 , & \hbox{$s \ge 2, t \ge 2$.}
                               \end{array}
                             \right.
.$$ Therefore, it is clear that
$$ V(i,j) \otimes V(k,l) =\left\{
                            \begin{array}{ll}
                            V(i,j) \oplus V(k,l) \oplus V(2,2)^{(j-i)(l-k)+1}, & \hbox{$i=k=1$;} \\
                            V(k,l) \oplus V(2,2)^{(j-i)(l-k+1)}, & \hbox{$i=1,k \ge 2$;} \\
                            V(i,j) \oplus V(2,2)^{(j-i+1)(l-k)}, & \hbox{$i \ge 2, k=1$;} \\
                            V(2,2)^{(j-i+1)(l-k+1)}, & \hbox{$i \ge 2, k \ge 2$.}
                            \end{array}
                                                          \right.$$

\end{example}

\begin{example} \rm
Consider the infinite quiver $\A_\infty.$ Take the bialgebra
structure on $\k \A_\infty$ as in Example 3.6 and keep the notations
therein. For any pair of integers $i,j$ with $0 \le i \le j,$ define
a representation $V(i,j)$ of $\A_\infty$ as the previous example.
Clearly, $\{ V(i,j) \ | \ 0 \le i \le j \}$ is a complete set of
locally nilpotent and locally finite indecomposable representations of $\A_\infty.$ As
in Example 4.3, take a basis element $e_s$ for the vector space $\k$
in $V(i,j)$ attached to the $s$-th vertex $g_s.$ The corresponding
comodule structure map of $V(i,j)$ is $$ \delta(e_s) = \sum_{x=s}^j
e_x \otimes p_s^{x-s}.$$ Consider the tensor product $V(i,j) \otimes
V(k,l).$ For $e_s \otimes e_t \in V(i,j) \otimes V(k,l),$ we have
$$\delta(e_s \otimes e_t) = \sum_{x=s}^j \sum_{y=t}^l {x-s+y-t \choose y-t}_q e_x \otimes
e_y \otimes p_{s+t}^{x-s+y-t}. $$ With this, it is not hard to see
that
$$V(0,1) \otimes V(0,n) = V(0,n+1) \oplus V(1,n)$$ and
$$V(i,j) \otimes V(1,1)= V(i+1,j+1) = V(1,1) \otimes V(i,j).$$ This implies that the representation ring
of $\Rep^{lnlf} \A_\infty$ is generated by $V(0,1)$ and $V(1,1)$ and
is isomorphic to the polynomial ring in two variables $\Z[X,Y].$
\end{example}

\vskip 0.15cm

\noindent{\bf Acknowledgements:} The research of H.-L. Huang was partially
supported by SDNSF (grant no. 2009ZRA01128) and IIFSDU (grant no.
2010TS021). The paper was written while Huang was visiting the
Universidad de Almer\'ia and he acknowledges its
hospitality. Both authors are very grateful to the referees for their useful suggestions.

\end{document}